\newcommand{\R}{\mathbb{R}}
\newcommand{\N}{\mathbb{N}}
\newcommand{\cad}{c\`{a}dl\`{a}g}
\DeclareMathOperator{\Disc}{Disc}
\newtheorem{lemma}{Lemma}
\newtheorem{theorem}{Theorem}
\newtheorem{proposition}{Proposition}
\theoremstyle{definition}
\newtheorem{example}{Example}
\newtheorem{rem}{Remark}
\newtheoremstyle{myquest}{\topsep}{\topsep}%
     {}
     {}
     {\bfseries}
     {:}
     {.5em}
     {}
\theoremstyle{myquest}
\newtheorem*{quest}{Q}
\begin{document}

\title{Weak convergence of stochastic integrals driven by \\continuous-time random walks}

\author{Meredith N. Burr}

\address{Mathematics and Computer Science Department, Rhode Island College,\newline
              \indent 600 Mount Pleasant Avenue, Providence, RI, 02908, USA}
\email{mburr@ric.edu}
\thanks{This paper is based on the author's dissertation, Continuous-time random walks, their scaling limits, and connections with stochastic integration, at Tufts University under the direction of Marjorie Hahn.}

\date{\today}

\begin{abstract}
Brownian motion is a well-known model for normal diffusion, but not all physical phenomena behave according to a Brownian motion.  Many phenomena exhibit irregular diffusive behavior, called anomalous diffusion.  Examples of anomalous diffusion have been observed in physics, hydrology, biology, and finance, among many other fields.  Continuous-time random walks (CTRWs), introduced by Montroll and Weiss, serve as models for anomalous diffusion.  CTRWs generalize the usual random walk model by allowing random waiting times between successive random jumps.  Under certain conditions on the jumps and waiting times, scaled CTRWs can be shown to converge in distribution to a limit process $M(t)$ in the \cad\ space $\mathbb{D}[0,\infty)$ with the Skorohod $J_1$ or $M_1$ topology.  An interesting question is whether stochastic integrals driven by the scaled CTRWs $X^n(t)$ converge in distribution to a stochastic integral driven by the CTRW limit process $M(t)$.  We prove weak convergence of the stochastic integrals driven by CTRWs for certain classes of CTRWs, when the CTRW limit process is an $\alpha$-stable L\'{e}vy motion and when the CTRW limit process is a time-changed Brownian motion.\\

\noindent{\em Key words}:  Continuous-time random walks, Stochastic integrals, Weak convergence, Anomalous diffusion, $\alpha$-stable L\'{e}vy motion, Time-changed Brownian motion.
\end{abstract}

\maketitle

\section{Introduction}
\label{intro}
Continuous-time random walks (CTRWs) were developed by Montroll and Weiss \cite{Montroll+Weiss} to study random walks on a lattice.  CTRWs generalize the usual random walk model by introducing random waiting times between jumps.  The addition of random waiting times allows CTRWs to capture irregular diffusive behavior, making CTRWs good models for anomalous diffusion.  Unlike normal diffusion, anomalous diffusion is characterized by a non-linear relationship between the mean squared displacement of a particle and time.  Anomalous diffusion has been observed in hydrology, where contaminants often travel more slowly in ground water due to sticking or trapping, in biology, where proteins diffuse more slowly across cell membranes, and in the behavior of many financial markets where heavy-tailed price jumps occur \cite{Klafter+Sokolov,MS:2008}.  Additional examples of anomalous diffusion can be found in physics, hydrology, biology, and finance, among many other fields \cite{Klafter+Sokolov,MS:2008,Metzler+Klafter,Scalas2006,SM:1975}.  Because of the existence of so many examples of anomalous diffusion, it is important to be able to model and to understand this type of behavior.

Physicists often use the CTRW model to derive a partial differential equation which describes the anomalous diffusion.  The differential operators in the normal diffusion equation $\frac{\partial p(x,t)}{\partial t} = D \frac{\partial^2 p(x,t)}{\partial x^2}$ may be replaced by fractional derivatives in time and space or even pseudo-differential operators in time and in space \cite{MS:2004,MS:2008}.  The density of the ``long-time scaling limit'' of the CTRW solves the space-time-fractional diffusion equation derived from the CTRW model \cite{MS:2004,Metzler+Klafter}.  Moreover, the long-time scaling limit of the CTRW is actually the weak limit of a sequence of normalized CTRWs.   Meerschaert and Scheffler derive the scaling limit of the CTRW under certain conditions and identify the CTRW limit process as a time-changed L\'{e}vy motion (see \cite{MS:2004} for details).  Studying the CTRW limit process and the processes it drives is important, since the dynamics of the limit process correspond to its anomalous diffusion equation.

Many phenomena are modeled by a stochastic differential equation driven by the CTRW limit process (for example, see \cite{Chakraborty,MWW}).  For applications, it is also important to consider how the stochastic integrals driven by the CTRW limit process can be approximated.  Let $\mathbb{D}_{\R^k}$ denote the \cad\ function space of $\R^k$-valued functions,  $J_1$ the standard Skorohod topology on $\mathbb{D}$ \cite{Billingsley}, and $\Rightarrow$ convergence in distribution.  We study the following question:
\begin{quest}
If $X^n(t)$ is a scaled CTRW and if $(H^n,X^n)\Rightarrow (H,M)$ in $(\mathbb{D}_{\R^2}, J_1)$ where $M(t)$ is the CTRW limit process, then under what conditions does
\begin{equation}
\label{eqn:question}
\left\{\int_0^t H^n(s-) \text{ }dX^n(s)\right\}_{t\geq 0} \Longrightarrow \left\{\int_0^t H(s-) \text{ }dM(s)\right\}_{t\geq 0} \text{ in } (\mathbb{D}_\R,J_1) \text{ as } n\rightarrow \infty?
\tag{$\ast$}
\end{equation}
\end{quest}
A paper of Germano \textit{et al.} conjectures that for the usual compound Poisson process $X(t)$, the stochastic integrals of the scaled process $X^n(t)$ driven by $X^n(t)$ converge weakly to the stochastic integral of Brownian motion driven by Brownian motion \cite{Scalas2010}, i.e. $\{\int_0^t X^n(s-)dX^n(s)\}_{t\geq 0} \Rightarrow \{\int_0^t B(s)dB(s)\}_{t\geq 0}$.  In this paper, we verify this conjecture as a special case of a more general result (see Example \ref{ex:Germano_conj}).  We show that (\ref{eqn:question}) holds in the case that the limit process is an $\alpha$-stable L\'{e}vy motion (Theorem \ref{thm1}), and in the case in which the limit process is a time-changed Brownian motion (Theorem \ref{thm2}).  In order to prove these results we rely on Kurtz and Protter's theorem on weak convergence of stochastic integrals \cite{KPDurham}.  The key condition is that the integrators $(X^n)$ be \textit{uniformly tight}.

In Section \ref{sec:background}, we recall the required facts on continuous-time random walks, scaling limits, and weak convergence of stochastic integrals.  Our main results are contained in Section \ref{sec:Results} as well as the example proving the conjecture of Germano \textit{et al.}  Section \ref{sec:App} focuses on some applications and examples of the main results.
\section{Background}
\label{sec:background}
\subsection{Continuous-time random walks}
\label{subsec:CTRWs}
Under the random walk model, random displacements occur at regular deterministic intervals.  The continuous-time random walk (CTRW) model allows random waiting times between successive random displacements, or jumps.  Let $(\xi_i)_{i\in \N}$ be a sequence of independent and identically distributed (i.i.d.) $\R$-valued random variables representing the particle jumps and $(\tau_i)_{i\in \N}$ be a sequence of i.i.d. nonnegative random variables representing the waiting times between successive jumps.  Throughout, the focus is on uncoupled CTRWs in which the sequences of jumps $(\xi_i)$ and waiting times $(\tau_i)$ are independent.  Set $S(0)=0$ and $T(0)=0$ and let $S(n) = \xi_1 + \hdots + \xi_n$ be the position of the particle after $n$ jumps and $T(n) = \tau_1 + \hdots + \tau_n$ be the time of the $n$th jump.  For $t\geq 0$, define
\begin{equation}
N(t) := \max\{n\geq 0: T(n) \leq t\}
\end{equation}
to be the number of jumps by time $t$.  The position of the particle at time $t$ can then be written as the sum of the jumps up to time $t$:
\begin{equation}
X(t) = S(N(t)) = \sum_{i=1}^{N(t)} \xi_i.
\end{equation}
The stochastic process $\{X(t)\}_{t\geq 0}$ is called a \textit{continuous-time random walk (CTRW)}.

\begin{figure}[htb]
\begin{center}
\begin{picture}(170,150)
\input{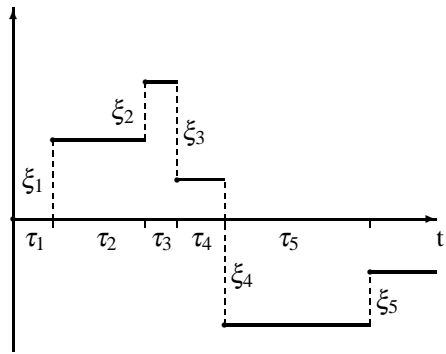}
\end{picture}
\caption{A continuous-time random walk.}
\end{center}
\end{figure}

Since the waiting times $(\tau_i)$ are i.i.d., $N(t)$ is a renewal counting process and the CTRW is a compound renewal process, or renewal-reward process (as seen in queueing theory).  A CTRW may also be called a point process with reward.  The counting process $N(t)$ and the the time of the $n$th jump $T(n)$ have a particularly nice inverse relationship.  For any $n\geq 0$ and any $t\geq 0$,
\begin{equation}
\{N(t)\geq n\}=\{T(n)\leq t\}.
\label{eqn:relationship}
\end{equation}
Equation (\ref{eqn:relationship}) holds because the number of jumps by time $t$ is at least $n$ if and only if the $n$th jump occurs at or before time $t$.

Every CTRW is a semi-Markov process and is Markovian if and only if the distribution of the waiting times is exponential \cite{Scalas2009,HPS}.  Germano \textit{et al.} \cite{Scalas2009} prove that a CTRW is also a martingale under certain conditions.  Since we require this result in Section \ref{sec:Results}, we include an alternate (and more direct) proof here.

\begin{lemma} 
Let $X(t) = \sum_{i=1}^{N(t)} \xi_i$ be a CTRW.  Assume $E(\xi_1)=0$, $E\big|\xi_1\big|<\infty$, and $E[N(t)] < \infty$ for each $t$.  Then $X(t)$ is a martingale.
\label{lem:martingale}
\end{lemma}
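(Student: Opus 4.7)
The plan is to verify the three standard conditions for a martingale — adaptedness, integrability, and the conditional expectation property — with respect to a natural filtration such as $\mathcal{F}_t := \sigma(X(u), N(u) : 0 \le u \le t)$. Adaptedness is built into this choice, so the work lies in the other two conditions and in managing the randomness of the upper summation index $N(t)$.

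For integrability I would invoke a Wald-type bound. Because the jump sequence $(\xi_i)$ is independent of the waiting-time sequence $(\tau_i)$, and $N(t)$ is a function of $(\tau_i)$ alone, one has
\[
E|X(t)| \;\le\; E\!\sum_{i=1}^{N(t)} |\xi_i| \;=\; E[N(t)]\cdot E|\xi_1| \;<\; \infty
\]
by the hypotheses of the lemma.

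For the martingale identity, for $0 \le s \le t$ I would write the increment as
\[
X(t) - X(s) \;=\; \sum_{i=N(s)+1}^{N(t)} \xi_i,
\]
and exploit once more the independence of $(\xi_i)$ and $(\tau_i)$. The random indices $N(s)$ and $M := N(t)-N(s)$ are $\sigma((\tau_i))$-measurable and hence independent of $(\xi_i)$. Conditioning first on $\mathcal{F}_s$ jointly with all future waiting times $\tau_{N(s)+1}, \tau_{N(s)+2}, \ldots$ freezes both $N(s)$ and $M$, while leaving the future jumps $\xi_{N(s)+1}, \xi_{N(s)+2}, \ldots$ as i.i.d.\ copies of $\xi_1$ independent of this enlarged $\sigma$-algebra. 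The conditional expectation of the increment therefore equals $M\cdot E[\xi_1] = 0$, and the tower property gives $E[X(t)-X(s)\mid \mathcal{F}_s] = 0$.

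The step I expect to require the most care is the conditioning argument, since the random upper limit $N(t)$ depends in particular on the residual waiting time $\tau_{N(s)+1}$ (so one cannot treat the post-$s$ increments as a renewal process independent of $\mathcal{F}_s$ by a simple splitting). What rescues the argument is the \emph{uncoupled} assumption: the entire jump sequence is jointly independent of the entire waiting-time sequence, which is exactly what is needed to carry out the conditional Wald identity above.
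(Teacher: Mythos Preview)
Your proof is correct and follows essentially the same route as the paper's: Wald's identity for integrability, then decomposing the increment as $\sum_{i=N(s)+1}^{N(t)}\xi_i$ and using the independence of the jump sequence from the waiting-time sequence to annihilate the conditional expectation. Your filtration also includes $N$, and your tower-property conditioning via the enlarged $\sigma$-algebra of all waiting times is more explicit than the paper's (which simply drops the conditioning ``since $\xi_i$ are i.i.d.'' and then applies Wald), but the architecture is identical.
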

\begin{proof}
First observe that $X(t) \in L^1$ for all $t\geq 0$ by Wald's Equality,
\begin{equation*}
E\Bigg| \sum_{i=1}^{N(t)} \xi_i \Bigg| \leq E \sum_{i=1}^{N(t)} |\xi_i|
= E|\xi_1|E[N(t)] < \infty.
\end{equation*}

Now let $\mathcal{F}_t = \sigma\{X(s), 0\leq s\leq t\}$, the natural filtration of $X$.  Then $X(t)$ is adapted to $\mathcal{F}_t$, and for $s\leq t$,
\begin{align*}
E[X(t)| \mathcal{F}_s] &= E[X(t) - X(s)| \mathcal{F}_s] + E[X(s)|\mathcal{F}_s]\notag\\
&= E\Bigg[\sum_{i=N(s) + 1}^{N(t)} \xi_i \Big|\mathcal{F}_s\Bigg] + X(s)\notag\\
&= E\Bigg[\sum_{i=N(s) + 1}^{N(t)} \xi_i\Bigg] + X(s) \quad \text{ since $\xi_i$ are i.i.d.}\notag\\
&= E[\xi_1](E[N(t)] - E[N(s)]) + X(s) \quad \text{ by Wald's Equality} \notag\\
&= X(s) \text{ since $E[\xi_1] = 0$}.
\end{align*}

\end{proof}

If $X(t)$ is a CTRW satisfying the conditions in Lemma \ref{lem:martingale} and $H(t)$ a \cad\ adapted process, then $X(t)$ is a martingale and we can understand $\int_0^t H(s-)dX_s$ in the sense of Protter \cite{Protter}.  But we can also make a simpler definition.  Germano \textit{et al.} \cite{Scalas2009} note that this integral can be defined as a sum, since the CTRW is a step function, i.e.,
\begin{equation}
\int_0^t H(s-) dX(s) = \sum_{i=1}^{N(t)} H(T_i-)\xi_i,
\end{equation}
where the integrand is evaluated at the jump times of $X$.  The right-hand side can be considered as a coupled CTRW, since the new jumps $H(T_i-)\xi_i$ clearly depend on $N(t)$.

\subsection{Scaling limits}
\label{subsec:scaling}
Here we recall the known scaling limit theorems for CTRWs.  By the scaling limit of a CTRW, we mean the limit process resulting from appropriate scaling in time and space according to a functional central limit theorem (FCLT).  The limit behavior of the CTRW depends on the distribution of the jumps and the waiting times.  If the waiting times have finite mean, the CTRW behaves like a random walk in the limit.  So, by Donsker's Theorem, if the waiting times have finite mean and the jumps have finite variance then the scaled CTRW converges in distribution to a Brownian motion.  If the waiting times have finite mean and the jumps are in the domain of attraction (DOA) of an $\alpha$-stable random variable, with $\alpha \in (0,2)$, then the appropriately scaled CTRW converges in distribution to an $\alpha$-stable L\'{e}vy motion {\cite[Theorem 4.5.3]{Whitt}}.

However, if the waiting times have an infinite mean, the CTRW limit behavior is more complex.  For this case, Meerschaert and Scheffler prove a FCLT which identifies the limit process as a composition of an $\alpha$-stable L\'{e}vy motion $A(t)$ and the inverse of a $\beta$-stable subordinator $E(t)$, where $\alpha \in (0,2]$ and $\beta \in (0,1)$ (see Theorem 4.2 in \cite{MS:2004} for details).  Their proof uses a continuous-mapping approach and the convergence holds in the $M_1$-topology and not in the stronger $J_1$-topology.  By examining the proof, we note that their convergence result only holds in the $M_1$-topology since the composition map is $M_1$ but not $J_1$-continuous at $(A,E)$.  However, in the case that $\alpha = 2$, we can prove that the the convergence holds in the $J_1$-topology.  This is important since the theorems on weak convergence of stochastic integrals are stated in the $J_1$-topology.  We provide a proof (in $\mathbb{R}$) for the case $\alpha=2$ here following the proof of Meerschaert and Scheffler.  The resulting limit process, a time-changed Brownian motion $B(E_t)$, is of particular interest since it is a good model for subdiffusion.

\begin{theorem}
Let $(\tau_i)_{i\in \N}$ be i.i.d. nonnegative random variables in the strict domain of attraction of a $\beta$-stable, $\beta \in (0,1)$.  Let $(\xi_i)_{i\in \N}$ be i.i.d. $\R$-valued random variables in the strict domain of attraction of a normal (i.e., an $\alpha$-stable with $\alpha=2$).  Let $\displaystyle X(t) = \sum\limits_{i=1}^{N_{t}} \xi_i$.
Then there exists a regularly varying function $\tilde{c}$ of index $\beta/2$ with $\tilde{c}(n)=c(\tilde{b}(n))$, $\tilde{b}$ regularly varying of index $\beta$, and $c(n) = \sqrt{n}L(n)$ for some slowly varying function $L$ such that, as $n\rightarrow \infty$,
\begin{equation*}
\bigg\{\frac{X(nt)}{\tilde{c}(n)}\bigg\}_{t\geq 0} \Rightarrow \{B(E(t))\}_{t\geq 0} \quad \text{ in } (\mathbb{D}_{\R},J_1).
\end{equation*}
\label{thm:B(E)}
\end{theorem}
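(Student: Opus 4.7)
The plan is to follow the continuous-mapping strategy of Meerschaert and Scheffler \cite{MS:2004}, exploiting the fact that in the special case $\alpha=2$ the outer limit is a continuous Brownian motion. This continuity is what removes the obstruction that forced Meerschaert and Scheffler to work in the weaker $M_1$ topology and allows the composition step to be carried out in $J_1$.

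First I set up the scalings and marginal FCLTs. Let $b(n)$ be regularly varying of index $1/\beta$ with $T([n\cdot])/b(n)\Rightarrow D(\cdot)$ in $(\mathbb{D}_\R,J_1)$, where $D$ is a $\beta$-stable subordinator; such $b$ exists by the classical FCLT for partial sums in the strict $\beta$-stable domain of attraction. Set $\tilde b(n):=b^{-1}(n)$, regularly varying of index $\beta$, and $\tilde c(n):=c(\tilde b(n))$, regularly varying of index $\beta/2$. From the hitting-time relationship $\{N(t)\ge k\}=\{T(k)\le t\}$ (equation (\ref{eqn:relationship})) together with continuity of the first-passage functional on nondecreasing \cad\ paths, the convergence of $T$ passes to
\begin{equation*}
\left\{\frac{N(nt)}{\tilde b(n)}\right\}_{t\ge 0} \Rightarrow \{E(t)\}_{t\ge 0} \quad \text{in } (\mathbb{D}_\R,J_1),
\end{equation*}
where $E=D^{-1}$ is the continuous inverse $\beta$-stable subordinator. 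Separately, Donsker's FCLT for jumps in the strict normal domain of attraction, applied along the sequence $\tilde b(n)\to\infty$, gives $\{S([\tilde b(n) s])/\tilde c(n)\}_{s\ge0}\Rightarrow\{B(s)\}_{s\ge 0}$ in $(\mathbb{D}_\R,J_1)$, with $B$ a standard Brownian motion.

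Because the jump sequence $(\xi_i)$ and the waiting-time sequence $(\tau_i)$ are independent, the two marginals combine to the joint convergence
\begin{equation*}
\left(\frac{S([\tilde b(n)\,\cdot])}{\tilde c(n)},\ \frac{N(n\,\cdot)}{\tilde b(n)}\right) \Rightarrow (B,E) \quad \text{in } (\mathbb{D}_{\R^2},J_1).
\end{equation*}
Writing $X(nt)/\tilde c(n)=S(N(nt))/\tilde c(n)=S([\tilde b(n)\cdot N(nt)/\tilde b(n)])/\tilde c(n)$, up to an asymptotically negligible integer-part correction, the scaled CTRW is the image of the two jointly converging processes under the composition map $(f,g)\mapsto f\circ g$. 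The continuous mapping theorem then delivers $X(nt)/\tilde c(n)\Rightarrow B(E(t))$ in $(\mathbb{D}_\R,J_1)$.

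The main obstacle is the $J_1$-continuity of composition at $(B,E)$. In general this composition is not $J_1$-continuous: if the outer path has a jump at a point where the inner nondecreasing path has a flat piece, a small perturbation can shift the jump off the flat piece, and no $J_1$-reparameterization can repair the mismatch. This is precisely why Theorem~4.2 of \cite{MS:2004} for $\alpha\in(0,2)$ can be stated only in $M_1$. Here, however, $B$ is continuous almost surely, and the standard continuity lemma---that $(f,g)\mapsto f\circ g$ is $J_1$-continuous at $(f,g)$ whenever $f$ is continuous and $g$ is continuous and nondecreasing---applies directly, since $E$ is continuous and nondecreasing almost surely. The remaining technical point, already routine in \cite{MS:2004}, is to absorb the integer-part rounding in $[\tilde b(n)\cdot N(nt)/\tilde b(n)]$ into the uniform-on-compacts error bounds, which follows from the uniform convergence of the scaled $S$-process and the local boundedness of $N(nt)/\tilde b(n)$.
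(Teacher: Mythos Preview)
Your proposal is correct and follows essentially the same continuous-mapping approach as the paper: establish joint convergence of the scaled partial-sum and counting processes via independence and the marginal FCLTs from \cite{MS:2004}, then apply the $J_1$-continuity of composition at $(B,E)$, which holds because $B$ is almost surely continuous. One minor point: the joint convergence obtained from independence naturally lives in the product space $(\mathbb{D}_\R\times\mathbb{D}_{\R_+},J_1)$ rather than $(\mathbb{D}_{\R^2},J_1)$, and the paper invokes Whitt's Theorem~13.2.2, which only needs the inner function in $\mathbb{D}_\uparrow$ (not continuous)---but neither point affects correctness here since $E$ is continuous.
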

\begin{proof}
We follow the continuous-mapping approach used in \cite{MS:2004}, substituting the $J_1$-continuity of composition theorem instead of the $M_1$.  Let $\displaystyle B^n(t) = \frac{S(\tilde{b}(n)t)}{\tilde{c}(n)}$ and $\displaystyle E^n(t) = \frac{N(nt)}{\tilde{b}(n)}$.  By the independence of the jumps $(\xi_i)$ and the waiting times $(\tau_i)$, Theorem 4.1 and Corollary 3.4 in \cite{MS:2004} imply that as $n\rightarrow \infty$,
\begin{equation*}
\{(B^n(t),E^n(t))\}_{t\geq 0} \Rightarrow \{B(t),E(t)\}_{t\geq 0} \quad \text{ in } (\mathbb{D}_\R \times \mathbb{D}_{\R_+}, J_1).
\end{equation*}
Consider the continuous-mapping theorem {\cite[Theorem 3.4.3]{Whitt}} where the map is composition, $\circ:(\mathbb{D}_\R \times \mathbb{D}_{\R_{+}},J_1) \rightarrow (\mathbb{D}_\R,J_1)$.  Let $\mathbb{C}$ be the space of continuous functions, $\mathbb{D}_{\uparrow}$ the space of nondecreasing \cad\ functions, and $\mathbb{C}_{\upuparrows}$ the space of strictly increasing continuous functions.  By the $J_1$-continuity of composition theorem {\cite[Theorem 13.2.2]{Whitt}}, the set of discontinuity points of composition under $J_1$ is $\Disc(\circ) = ((\mathbb{C} \times \mathbb{D}_{\uparrow}) \cup (\mathbb{D} \times \mathbb{C}_{\upuparrows}))^c$.  Since $(B(t),E(t))\in \mathbb{C} \times \mathbb{D}_{\uparrow}$, $P((B,E) \in \Disc(\circ)) = 0$.  Since $\circ$ is measurable by \cite[p. 145]{Billingsley}, the continuous-mapping theorem implies that $B^n \circ E^n \Rightarrow B\circ E$ in the $J_1$ topology.  Consequently, as $n\rightarrow \infty$,
\begin{equation*}
\bigg\{\frac{X(nt)}{\tilde{c}(n)}\bigg\}_{t\geq 0} \Rightarrow \{B(E(t))\}_{t\geq 0} \quad \text { in }
(\mathbb{D}_{\R},J_1).
\end{equation*}

\end{proof}

\subsection{Weak convergence of stochastic integrals}
\label{subsec:ConvThms}
We now review the conditions under which a sequence of stochastic integrals converges in distribution.  Let $\Theta^n = (\Omega^n, \mathcal{F}^n, (\mathcal{F}^n_t)_{t\geq 0}, P^n)$ be a sequence of filtered probability spaces satisfying the usual hypotheses: (1) $\mathcal{F}^n_0$ contains all the $P^n$-null sets of $\mathcal{F}^n$ and (2) the filtration $(\mathcal{F}^n_t)$ is right-continuous, i.e., $\mathcal{F}^n_t = \cap_{\varepsilon >0} \mathcal{F}^n_{t+\varepsilon}$.  Assume $(H^n)$ is a sequence of \cad\ $\mathcal{F}_t^n$-adapted processes, $X^n$ a sequence of \cad\ $\mathcal{F}_t^n$-semimartingales, and $X^n \Rightarrow X$.  Kurtz and Protter call $(X^n)$ \textit{good} if for any sequence $(H^n)$ in $\mathbb{D}_{\R}$ defined on $\Theta^n$ such that $(H^n,X^n)\Rightarrow (H,X)$ in $(\mathbb{D}_{\R^2},J_1)$, then there exists a filtration $(\mathcal{F}_t)$ such that $X$ is an $(\mathcal{F}_t)$-semimartingale and $\int H^n_{s-} \text{ }dX^n_s \Rightarrow \int H_{s-}\text{ }dX_s$ in $(\mathbb{D}_{\R},J_1)$ {\cite[Definition 13]{KP1995}}.

Necessary and sufficient conditions for goodness are discussed in \cite{KPDurham,KP1995}.  First, it is necessary to assume $(H^n,X^n)$ converges jointly to $(H,X)$ in $\mathbb{D}_{\mathbb{R}^2}$ and not in $\mathbb{D}_\mathbb{R} \times \mathbb{D}_\mathbb{R}$.  If this joint convergence holds, then the key condition is that the integrators $(X^n)$ be \textit{uniformly tight} (UT) or, equivalently, have uniformly controlled variations (UCV) \cite{KP1995}.  In fact, if $(H^n,X^n) \Rightarrow (H,X)$ in $(D_{\mathbb{R}^2},J_1)$, then $(X^n)$ is good if and only if $(X^n)$ satisfies UT (equivalently UCV) {\cite[Theorem 32, 34]{KP1995}}.  We use the UT condition throughout this paper.  The definition of uniform tightness was first introduced by Jakubowski, M\'{e}min, and Pag\`{e}s \cite{JMP}.  A sequence of semimartingales $(X^n)$ is said to be \textit{uniformly tight} (UT), if for each $t>0$, the set $\{\int_0^t H^n_{s-}dX^n_s, H^n \in \mathbf{S}^n, |H^n|\leq 1, n\geq 1\}$ is stochastically bounded (uniformly in $n$), where $\mathbf{S}^n$ denotes the collection of simple predictable processes on $\Theta^n$.  Since we desire the weak convergence of the stochastic integrals $\int H^n_{s-} \text{ }dX^n_s \Rightarrow \int H_{s-}\text{ }dX_s$, it makes sense to require tightness of the laws of $\int H^n_{s-} \text{ }dX^n_s$.  However, it is not easy to verify the UT condition directly.  We rely on the following proposition, which gives a more accessible condition for UT to hold.  Let $|\Delta X^n(t)| = |X^n(t) - X^n(t-)|$ be the size of a jump at time $t$.

\begin{proposition}\cite{JMP,KP1995}
If $(X^n)_{n\geq 1}$ is a sequence of local martingales and if for each $t<\infty$,
\begin{equation}
\sup_n E^n\left[\sup_{s\leq t}|\Delta X^n(s)|\right] < \infty,
\label{eqn:sup}
\end{equation}
then $(X^n)$ satisfies UT.
\label{prop:UT}
\end{proposition}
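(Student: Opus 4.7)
The plan is to verify the UT condition directly from its definition by decomposing each local martingale $X^n$ via the Davis decomposition into a controlled-jumps local martingale plus a compensated big-jump remainder of integrable variation, and then bounding the two resulting stochastic integrals separately.

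\textbf{Step 1 (Decomposition).} For each $n$, write
$$X^n = L^n + V^n,$$
where $L^n$ is a local martingale whose jumps are pathwise dominated by a constant multiple of $\sup_{s\leq\,\cdot}|\Delta X^n(s)|$, and $V^n$ is a local martingale of integrable variation satisfying
$$E^n\bigl[|V^n|_t\bigr] \;\leq\; c\,E^n\!\left[\sup_{s\leq t}|\Delta X^n(s)|\right],$$
which is uniformly bounded in $n$ by hypothesis (\ref{eqn:sup}).

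\textbf{Step 2 (Splitting and bounding the integral).} Fix $t>0$ and a simple predictable integrand $H^n$ with $|H^n|\leq 1$, and split
$$\int_0^t H^n_{s-}\,dX^n_s \;=\; \int_0^t H^n_{s-}\,dL^n_s \;+\; \int_0^t H^n_{s-}\,dV^n_s.$$
The second integral is bounded pathwise by $|V^n|_t$, hence by Step~1 and Markov's inequality is stochastically bounded uniformly in $n$ and in $H^n$. For the first integral, localize at $\sigma^n_K := \inf\{s:\sup_{u\leq s}|\Delta X^n(u)|\geq K\}$; another application of Markov's inequality to (\ref{eqn:sup}) makes $P^n(\sigma^n_K\leq t)$ uniformly small in $n$ when $K$ is large. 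On $\{\sigma^n_K>t\}$, $L^n$ has jumps bounded by a constant multiple of $K$, so $\int_0^{\cdot}H^n_{s-}\,dL^n_s$ is a local martingale with bounded jumps, and Doob's $L^2$ inequality together with the angle-bracket estimate for bounded-jump martingales controls its running supremum uniformly in $n$. Combining these two bounds via the triangle inequality gives, for each $\epsilon>0$, a constant $K'$ with
$$\sup_n \sup_{H^n\in\mathbf{S}^n,\,|H^n|\leq 1} P^n\!\left(\left|\int_0^t H^n_{s-}\,dX^n_s\right|>K'\right)<\epsilon,$$
which is the UT condition.

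\textbf{Main obstacle.} The technical heart of the proof lies in Step~1. The hypothesis (\ref{eqn:sup}) controls only the \emph{largest} jump of $X^n$, whereas UT must accommodate integrals that are sensitive to every jump. The Davis decomposition bridges exactly this gap by converting a bound on $E^n[\sup_{s\leq t}|\Delta X^n(s)|]$ into an $L^1$ bound on the total variation of the compensated big-jump process; invoking (or re-deriving) this decomposition for local martingales, and verifying that the jump bound on $L^n$ propagates through the localization in Step~2, is the main piece of work.
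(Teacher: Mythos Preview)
The paper does not prove this proposition; it is quoted from \cite{JMP,KP1995} and used as a black box in Theorems~\ref{thm1} and~\ref{thm2}, so there is no in-paper argument to compare against. Your overall strategy via the Davis decomposition is the standard one for this type of result, and Step~1 is correct: Davis's inequality gives $E^n[|V^n|_t]\le c\,E^n[\sup_{s\le t}|\Delta X^n_s|]$, uniformly bounded by \eqref{eqn:sup}, so the $\int H^n\,dV^n$ contribution is handled exactly as you say.

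The genuine gap is in Step~2, at the claim that ``Doob's $L^2$ inequality together with the angle-bracket estimate for bounded-jump martingales controls its running supremum uniformly in $n$.'' A bound on the jumps of $L^n$ yields local square-integrability but \emph{no} uniform-in-$n$ control of $[L^n,L^n]_t$ or $\langle L^n,L^n\rangle_t$: a continuous local martingale has zero jumps and arbitrary quadratic variation. Concretely, $X^n=\sqrt{n}\,B$ with $B$ a Brownian motion satisfies \eqref{eqn:sup} trivially and makes your $\sigma^n_K=+\infty$, yet $\int_0^t 1\,dX^n_s=\sqrt{n}\,B_t$ is not stochastically bounded, so $(X^n)$ fails UT. This shows both that Step~2 cannot be closed as written and that the proposition, read literally, is missing a hypothesis: in the cited sources the result is stated together with tightness (equivalently, weak convergence) of $(X^n)$, which in this paper is always available from the assumed $(H^n,X^n)\Rightarrow(H,M)$. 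With tightness in hand, the correct move is to localize at $\rho^n_K=\inf\{s:|L^n_s|\ge K\}$ rather than at your $\sigma^n_K$; the jump bound then forces $|L^{n,\rho^n_K}|\le(1+c)K$, whence $E^n\big[[L^n,L^n]_{t\wedge\rho^n_K}\big]=E^n\big[(L^n_{t\wedge\rho^n_K})^2\big]\le(1+c)^2K^2$, while tightness of $(L^n)$ (inherited from $(X^n)$ via the control on $V^n$) makes $P^n(\rho^n_K\le t)$ uniformly small. Localizing on the \emph{size} of $L^n$, not on the size of its jumps, is the missing idea.
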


\section{Results}
\label{sec:Results}
Our first result concerns the weak convergence of a stochastic integral driven by a CTRW with deterministic waiting times to a stochastic integral driven by the CTRW limit process $A(t)$, an $\alpha$-stable L\'{e}vy motion.  Let $S_{\alpha} = S_{\alpha}(1,\gamma,0)$ be the family of $\alpha$-stable distributions with skewness parameter $\gamma \in [-1,1]$.  Our proof of the following theorem requires that the jumps $\xi_i$ have a finite first moment, so we assume $\alpha \in (1,2]$.  This assumption is sufficient for most applications involving superdiffusion.

\begin{theorem}
Let $(\xi_i)_{i\in \N}$ be i.i.d. mean 0 random variables.  Assume $\xi_1$  belongs to the domain of attraction of an $\alpha$-stable random variable $S_{\alpha}$, $\alpha \in (1,2]$.  Define
\begin{equation}\label{eq:stable CTRW}
X^n(t) = \frac{S(nt)}{a(n)} = \sum_{i=1}^{\lfloor nt \rfloor} \frac{\xi_i}{a(n)},
\end{equation}
where $a(n) = n^{1/\alpha}L(n)$ with $L$ a slowly varying function.  If $(H^n, X^n) \Rightarrow (H,A)$ in $(\mathbb{D}_{\R^2},J_1)$, then there exists a filtration $(\mathcal{F}_{t})$ such that $A$ is an $(\mathcal{F}_t)$-semimartingale, $H$ is an $(\mathcal{F}_t)$-adapted \cad\ process, and as $n\rightarrow \infty$
\begin{equation}
\left\{\int_0^t H^n(s-) \text{ }dX^n(s)\right\}_{t\geq 0} \Longrightarrow \left\{\int_0^t H(s-) \text{ }dA(s)\right\}_{t\geq 0} \text{ in } (\mathbb{D}_{\R},J_1).
\end{equation}
\label{thm1}
\end{theorem}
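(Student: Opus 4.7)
The plan is to verify the uniform tightness (UT) hypothesis of Kurtz and Protter's theorem for the integrators $(X^n)$; once UT is in hand, the given joint convergence $(H^n, X^n) \Rightarrow (H, A)$ in $(\mathbb{D}_{\R^2}, J_1)$ together with the definition of goodness immediately yields the stated conclusion, including the existence of a filtration $(\mathcal{F}_t)$ under which $A$ is a semimartingale and $H$ is adapted.

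The first step is to realize $X^n$ as a CTRW satisfying the hypotheses of Lemma~\ref{lem:martingale}. Writing $\tau_i^n := 1/n$ and $\eta_i^n := \xi_i / a(n)$, the scaled partial sum $X^n(t) = \sum_{i=1}^{\lfloor nt \rfloor} \eta_i^n$ is the CTRW with deterministic waiting times $\tau_i^n$ and i.i.d.\ jumps $\eta_i^n$. Because $\alpha > 1$, the domain-of-attraction hypothesis forces $E|\xi_1| < \infty$, so $E\eta_1^n = 0$ and $E|\eta_1^n| < \infty$; moreover the renewal process $N^n(t) = \lfloor nt \rfloor$ trivially has finite mean. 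Lemma~\ref{lem:martingale} then gives that each $X^n$ is a martingale in its natural filtration, and Proposition~\ref{prop:UT} reduces UT to the single quantitative bound
\[
\sup_n \frac{1}{a(n)} E\!\left[\max_{1 \leq k \leq \lfloor nt \rfloor} |\xi_k|\right] < \infty \qquad \text{for each fixed } t < \infty.
\]

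This uniform integrability of the scaled maximum jump is the main technical obstacle and is where the domain-of-attraction hypothesis enters essentially. I would use the layer-cake representation
\[
\frac{1}{a(n)} E\!\left[\max_{k \leq n} |\xi_k|\right] = \int_0^\infty \bigl(1 - (1 - P(|\xi_1| > a(n) y))^n\bigr)\, dy \leq \int_0^\infty \min\!\bigl(1,\, n\, P(|\xi_1| > a(n) y)\bigr)\, dy
\]
and split at some threshold $y = M$. On $[0, M]$ the integrand is bounded by $1$, contributing at most $M$. For $\alpha \in (1, 2)$, $P(|\xi_1| > x) = x^{-\alpha} L_0(x)$ is regularly varying, the defining relation $n P(|\xi_1| > a(n)) \to c$ holds, and Potter's bounds give $n P(|\xi_1| > a(n) y) \leq C y^{-\alpha + \varepsilon}$ for $y \geq M$ and $n$ large, which is integrable at infinity since $\alpha - \varepsilon > 1$ for $\varepsilon$ small. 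The boundary case $\alpha = 2$ is handled either by the slow variation of the truncated second moment, or directly by Cauchy--Schwarz when $E\xi_1^2 < \infty$: then $E[\max_k |\xi_k|] \leq \sqrt{n\, E \xi_1^2}$ and $a(n) \asymp \sqrt{n}$. Replacing $n$ by $\lfloor nt \rfloor$ alters only the constant, so UT follows and Kurtz and Protter's theorem delivers the convergence $(\ast)$.
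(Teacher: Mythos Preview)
Your overall architecture is exactly the paper's: realize $X^n$ as a CTRW with deterministic waiting times, invoke Lemma~\ref{lem:martingale} for the martingale property, reduce UT to the jump bound of Proposition~\ref{prop:UT}, and finish with Kurtz--Protter. The only divergence is in how the core estimate $\sup_n a(n)^{-1} E[\max_{k\leq \lfloor nt\rfloor}|\xi_k|]<\infty$ is obtained. You go through the layer-cake formula and control $nP(|\xi_1|>a(n)y)$ via regular variation of the tail and Potter's bounds; this is clean for $\alpha\in(1,2)$ but forces a case split at $\alpha=2$, and your infinite-variance sub-case (``handled by the slow variation of the truncated second moment'') would need a few more lines to be complete, e.g.\ the Karamata-type estimate $nE[|\xi_1|1_{\{|\xi_1|>a(n)\}}]=o(a(n))$. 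The paper instead symmetrizes with independent Rademacher signs $\epsilon_k$, applies L\'{e}vy's inequality to dominate the maximum by twice the symmetrized sum, and then uses the convergence of first absolute moments $E|\widetilde{S}(m)/a(m)|\to E|S_\alpha|$ (valid for all $\alpha\in(1,2]$) to bound $E|\widetilde{S}(nt)/a(n)|$ uniformly. The paper's route is slicker in that it treats the full range $\alpha\in(1,2]$ in one stroke with no tail-regularity casework; your route is more hands-on and avoids the symmetrization/L\'{e}vy-inequality trick, at the cost of the boundary case requiring separate attention.
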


Our goal is to show Theorem \ref{thm1} follows from Proposition \ref{prop:UT} and Theorem 34 in \cite{KP1995} once the conditions of Proposition \ref{prop:UT} are verified.

\begin{proof}
We first show that $(X^n(t))_{n\geq 1}$ is a sequence of martingales.  Note that the jumps $\xi_i$ have mean 0 and finite first moment since $\alpha > 1$.  By the same techniques used in Lemma \ref{lem:martingale}, we see that $X^n(t)$ is an $\mathcal{F}_t^n$-martingale (and therefore a local martingale) for each $n$, where $\mathcal{F}_t^n$ is the natural filtration of $X^n$.  Therefore, the first hypothesis of Proposition \ref{prop:UT} holds.

To verify the second hypothesis, we need to show that $\sup_n E^n[\sup_{s\leq t}|\Delta X^n(s)|]<\infty$ for each $t<\infty$.  Fix $t < \infty$.  Observe that, since $X^n(s)$ is a c\`{a}dl\`{a}g step process, it has jumps of size $|\frac{\xi_k}{a(n)}|$ at times $k/n$, $k\in \N$.  Additionally, since $X^n(t)$ has finitely many jumps by time $t$, we can replace the $\sup$ up to time $t$ by a $\max$ up to $\lfloor nt \rfloor$.  So we need to find for each $t$ a uniform bound in $n$ for $E^n\Big[\max_{1\leq k\leq \lfloor nt \rfloor}\big|\frac{\xi_k}{a(n)}\big|\Big]$.
Let $(\epsilon_k)$ be a sequence of i.i.d. Rademacher random variables, independent of $(\xi_k)$.  Then $|\xi_k|=|\epsilon_k\xi_k|$ and $\epsilon_k\xi_k$ are i.i.d. in $DOA(S_\alpha)$.  Let $\displaystyle \widetilde{S}(nt)=\sum_{k=1}^{\lfloor nt \rfloor} \epsilon_k\xi_k$, the symmetrized sum.  By L\'{e}vy's Inequality,
\begin{align*}
E^n\Bigg[\max_{1\leq k\leq \lfloor nt \rfloor}\bigg|\frac{\xi_k}{a(n)}\bigg|\Bigg]
&=\int_0^\infty P^n\Bigg\{\max_{1\leq k\leq \lfloor nt \rfloor}\bigg|\frac{\epsilon_k\xi_k}{a(n)}\bigg| > x \Bigg\} \text{ }dx\\
&\leq \int_0^\infty 2P^n\Bigg\{\bigg|\sum_{k=1}^{\lfloor nt \rfloor}\frac{\epsilon_k\xi_k}{a(n)}\bigg| > x\Bigg\} \text{ }dx
= 2E^n\Bigg|\frac{\widetilde{S}(nt)}{a(n)} \Bigg|.
\end{align*}

\noindent Note that for each $n$,
\begin{equation*}
2E^n\bigg|\frac{\widetilde{S}(nt)}{a(n)} \bigg| \leq 2\sum_{i=1}^{\lfloor nt \rfloor}
E^n\bigg|\frac{\epsilon_i\xi_i}{a(n)}\bigg| = 2\lfloor nt \rfloor E^n\bigg|\frac{\xi_1}{a(n)}\bigg| < \infty.
\end{equation*}
$\xi_1$ has a finite first moment since it is in the domain of attraction of $S_\alpha$, $\alpha \in (1,2]$.  So, for $n$ small, the expectation is bounded.  It remains to show the expectation is bounded in the case of large $n$.  Observe that
\begin{equation*}
2E^n\bigg|\frac{\widetilde{S}(nt)}{a(n)} \bigg| =
2\bigg(\frac{\lfloor nt \rfloor}{n}\bigg)^{1/\alpha}\frac{L(\lfloor nt \rfloor)}{L(n)}
E^n\bigg|\frac{\widetilde{S}(nt)}{a(\lfloor nt\rfloor)} \bigg|.
\end{equation*}

\noindent Since $\epsilon_k\xi_k$ are in $DOA(S_\alpha)$ for $\alpha \in (1,2]$, then by \cite[Exercise 9, p. 91]{AG} the following first moments converge: 
\begin{equation}
E^n\Bigg|\frac{\widetilde{S}(nt)}{a(\lfloor nt \rfloor)} \Bigg| \rightarrow E|S_{\alpha}| \text{ as } n\rightarrow \infty.
\label{eqn:conv_moments}
\end{equation}

By applying Equation (\ref{eqn:conv_moments}) together with a corollary by Feller in \cite[p. 274]{Feller} on slowly varying functions, there exists some $n_0 \in \N$ such that for all $n > n_0$,
\begin{equation*}
2E^n\bigg|\frac{\widetilde{S}(nt)}{a(n)} \bigg| \leq 2ct^{1/\alpha}(E|S_\alpha| + 1) < \infty ,
\end{equation*}
for $c>1$, where $S_\alpha$ has a finite first moment since $\alpha\in (1,2]$.

So the desired inequality holds:
\begin{align*}
\sup_n E^n\bigg\{\sup_{s\leq t}|\Delta X^n(s)|\bigg\}\notag
&= \max\bigg(\max_{1\leq n\leq n_0} E^n\Big\{\sup_{s\leq t}|\Delta X^n(s)|\Big\},
\sup_{n>n_0} E^n\Big\{\sup_{s\leq t}|\Delta X^n(s)|\Big\}\bigg)\notag\\
&\leq \max_{1\leq n\leq n_0}2\lfloor nt \rfloor E^n\bigg|\frac{\xi_1}{a(n)}\bigg|
+ 2ct^{1/\alpha}(E|S_\alpha| + 1)< \infty.
\end{align*}

Since $(X^n)$ satisfies UT, we apply Theorem 34 in \cite{KP1995} to conclude that if $(H^n,X^n) \Rightarrow (H,A)$ in $(\mathbb{D}_{\R^2},J_1)$, then there exists an appropriate filtration such that as $n\rightarrow \infty$,
\begin{equation*}
\left\{\int_0^t H^n(s-) \text{ }dX^n(s)\right\}_{t\geq 0} \Longrightarrow \left\{\int_0^t H(s-) \text{ }dA(s)\right\}_{t\geq 0} \text{ in } (\mathbb{D}_{\R},J_1).
\end{equation*}

\end{proof}

By an easy application of this theorem, we see that the stochastic integral of a scaled simple random walk with respect to itself converges in distribution to the stochastic integral of Brownian motion with respect to Brownian motion.

\begin{example}
Let $\displaystyle X^n(t) = \sum\limits_{i=1}^{\lfloor nt \rfloor}\frac{\xi_i}{\sqrt{n}}$, where $\xi_i$ are i.i.d. mean 0 random variables with variance 1.  By Donsker's Theorem, $\{X^n(t)\}_{t\geq 0} \Rightarrow \{B(t)\}_{t\geq 0}$ in $(\mathbb{D}_\R,J_1)$, so $(X^n,X^n)\Rightarrow (B,B)$ in $(\mathbb{D}_{\R^2},J_1)$.  Then by Theorem \ref{thm1} as $n\rightarrow \infty$,
\begin{equation}
\left\{\int_0^t X^n(s-) \text{ }dX^n(s)\right\}_{t\geq 0} \Longrightarrow \left\{\int_0^t B(s) \text{ }dB(s)\right\}_{t\geq 0} \text{ in } (\mathbb{D}_{\R},J_1).
\end{equation}
\end{example}

We now establish a result concerning the weak convergence of stochastic integrals driven by CTRWs to a stochastic integral driven by the CTRW limit process $B(E_t)$ when the CTRWs have infinite mean waiting times and jumps in the domain of normal attraction of a normal law.  Specifically:

\begin{theorem}
Let $(\xi_i)_{i\in \N}$ be i.i.d. mean 0 random variables with $E|\xi_1|^2 = c^2$, so $\xi_1$ belongs to the domain of normal attraction of a normal random variable $Z=N(0,1)$.  Let $(\tau_i)_{i\in \N}$ be i.i.d. strictly $\beta$-stable random variables, $\beta \in (0,1)$.  Define
\begin{equation}
X^n(t) = \sum_{i=1}^{N_{nt}} \frac{\xi_i}{cn^{\beta/2}}.
\label{eqn:thm2CTRW}
\end{equation}
If $(H^n,X^n) \Rightarrow (H,B(E))$ in $(\mathbb{D}_{\R^2},J_1)$, then there exists a filtration $(\mathcal{F}_{t})$ such that $B(E)$ is an $(\mathcal{F}_t)$-semimartingale, $H$ is a $(\mathcal{F}_t)$-adapted \cad\ process, and as $n\rightarrow \infty$
\begin{equation}
\left\{\int_0^t H^n(s-) \text{ }dX^n(s)\right\}_{t\geq 0} \Longrightarrow \left\{\int_0^t H(s-) \text{ }dB(E_s)\right\}_{t\geq 0} \text{ in } (\mathbb{D}_\R,J_1).
\end{equation}
\label{thm2}
\end{theorem}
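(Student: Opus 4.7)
The plan is to follow the same template as the proof of Theorem \ref{thm1}: verify the two hypotheses of Proposition \ref{prop:UT} so that $(X^n)$ is uniformly tight, and then invoke Theorem 34 of \cite{KP1995} to transfer this to convergence of the stochastic integrals. The main structural difference is that the number of jumps of $X^n$ by time $t$ is now the random quantity $N(nt)$ in place of the deterministic $\lfloor nt\rfloor$, so the uniform tightness estimate must accommodate a random time change while exploiting the independence of $(\xi_i)$ and $(\tau_i)$ in an uncoupled CTRW.

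For the first hypothesis, I would verify that each $X^n$ is a martingale in its natural filtration $\mathcal{F}^n_t$ by applying Lemma \ref{lem:martingale}. The conditions $E\xi_1=0$ and $E|\xi_1|\leq (E\xi_1^2)^{1/2}=c<\infty$ are given, and $E[N(nt)]<\infty$ for each $t$ follows from the strict positivity of the $\beta$-stable waiting times via $E[N(nt)]=\sum_{k\geq 1}P(T(k)\leq nt)$ together with the stability scaling $T(k)\stackrel{d}{=}k^{1/\beta}\tau_1$, which makes the series summable.

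For the second hypothesis, the key observation is that $X^n$ is a pure-jump step process whose jumps at the times $T_k/n$ have magnitude $|\xi_k|/(cn^{\beta/2})$, so
\[
\sup_{s\leq t}|\Delta X^n(s)| \;=\; \max_{1\leq k\leq N(nt)}\frac{|\xi_k|}{cn^{\beta/2}}.
\]
Conditioning on $N(nt)$, which is independent of $(\xi_k)$, and using that each $\xi_k$ lies in $L^2$ with $E\xi_k^2=c^2$, the elementary bound $E[\max_{k\leq m}|\xi_k|^2]\leq \sum_k E\xi_k^2=mc^2$ together with Jensen's inequality gives $E[\max_{k\leq m}|\xi_k|]\leq c\sqrt{m}$; alternatively, the Rademacher symmetrization and L\'evy's inequality from Theorem \ref{thm1} deliver the same bound up to a constant, since $E|\widetilde S_m|\leq (E|\widetilde S_m|^2)^{1/2}=c\sqrt{m}$. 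Taking expectations and applying Jensen once more reduces the estimate to
\[
E^n\Bigl[\sup_{s\leq t}|\Delta X^n(s)|\Bigr] \;\leq\; \frac{c\,E\bigl[\sqrt{N(nt)}\bigr]}{c n^{\beta/2}} \;\leq\; \frac{\sqrt{E[N(nt)]}}{n^{\beta/2}}.
\]

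The step I expect to be the main obstacle is the renewal-theoretic bound $E[N(nt)]=O(n^\beta)$ as $n\to\infty$. Since $\tau_1$ is strictly $\beta$-stable with $\beta\in(0,1)$, its tail satisfies $P(\tau_1>x)\sim Kx^{-\beta}$, and the classical renewal theorem for regularly varying inter-arrival times yields $E[N(t)]\sim t^\beta/(K\Gamma(1-\beta)\Gamma(1+\beta))$ as $t\to\infty$; combined with the finiteness of $E[N(nt)]$ for small $n$, this gives $\sup_n E[N(nt)]/n^\beta<\infty$ for each $t$. Proposition \ref{prop:UT} then applies and $(X^n)$ is UT. The assumed joint convergence $(H^n,X^n)\Rightarrow(H,B(E))$ in $(\mathbb{D}_{\R^2},J_1)$, together with Theorem 34 of \cite{KP1995}, then delivers the stated weak convergence of the stochastic integrals in $(\mathbb{D}_\R,J_1)$.
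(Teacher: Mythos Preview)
Your proposal is correct and follows the same overall template as the paper's proof: verify that each $X^n$ is a martingale via Lemma~\ref{lem:martingale}, check the jump-size condition of Proposition~\ref{prop:UT}, and then apply Theorem~34 of \cite{KP1995}. The differences lie only in how the two technical bounds are obtained.

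For the jump-size estimate, the paper conditions on $N(nt)$, symmetrizes with Rademacher variables, applies L\'evy's inequality, and then uses convergence of first moments $E|\widetilde S(m)/m^{1/2}|\to E|Z|$ to control the tail in $m$; this mirrors the proof of Theorem~\ref{thm1} and would survive weakening the moment hypothesis on $\xi_1$. You instead exploit the finite second moment directly via $E[\max_{k\leq m}|\xi_k|]\leq (\sum_k E\xi_k^2)^{1/2}=c\sqrt{m}$, which is shorter and avoids symmetrization altogether; this is legitimate precisely because the theorem assumes $E\xi_1^2=c^2<\infty$. For the renewal bound $\sup_n E[N(nt)]/n^\beta<\infty$, the paper proves this from scratch as Lemma~\ref{lem:unifbound} using Zolotarev's asymptotic for the $\beta$-stable density near~$0$, whereas you invoke the classical infinite-mean renewal theorem $U(t)\sim t^\beta/(K\Gamma(1-\beta)\Gamma(1+\beta))$ for regularly varying inter-arrival tails. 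Both routes are standard and yield the same conclusion; yours is more economical under the stated hypotheses, while the paper's is self-contained and parallels the machinery already developed for Theorem~\ref{thm1}.
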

As in the proof of Theorem \ref{thm1}, we prove this result by first verifying that the conditions of Proposition \ref{prop:UT} hold and then applying Theorem 34 in \cite{KP1995}.  However, since this case involves random waiting times with infinite mean, our proof also requires a uniform bound on the expectation of the scaled counting process $N_{nt}/n^\beta$.  We prove the necessary lemma here before presenting the proof of Theorem \ref{thm2}.

\begin{lemma}
Let $(\tau_i)_{i\in \N}$ be i.i.d. strictly $\beta$-stable random variables, $\beta \in (0,1)$.  Then for each $t\geq 0$,
\begin{equation}
E\left[\frac{N_{nt}}{n^\beta}\right] \leq t^{\beta}M,
\end{equation}
for all $n\geq 1$, where $M$ is a constant.
\label{lem:unifbound}
\end{lemma}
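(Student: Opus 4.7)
The plan is to combine the inverse relation $\{N_{nt}\geq k\}=\{T_k\leq nt\}$ from (\ref{eqn:relationship}) with the self-similarity $T_k \stackrel{d}{=} k^{1/\beta}\tau_1$ coming from strict $\beta$-stability. These two identities give
\[
E[N_{nt}] \;=\; \sum_{k=1}^\infty P(N_{nt}\geq k) \;=\; \sum_{k=1}^\infty P(T_k\leq nt) \;=\; \sum_{k=1}^\infty P\bigl(\tau_1 \leq (nt)\,k^{-1/\beta}\bigr),
\]
so the task reduces to bounding this series by $(nt)^\beta$ times a finite constant that depends only on the law of $\tau_1$.

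Next I would compare the sum to an integral. Setting $g(j) := P(\tau_1\leq j^{-1/\beta})$ and $\Delta := 1/(nt)^\beta$, each summand equals $g(k\Delta)$, and since $g$ is non-increasing the standard decreasing-function estimate $\sum_{k=1}^\infty g(k\Delta)\,\Delta \leq \int_0^\infty g(j)\,dj$ yields
\[
E[N_{nt}] \;\leq\; (nt)^\beta \int_0^\infty P\bigl(\tau_1\leq j^{-1/\beta}\bigr)\,dj.
\]
The substitution $x = j^{-1/\beta}$ converts the right-hand integral into $\beta\int_0^\infty P(\tau_1\leq x)\,x^{-\beta-1}\,dx$, and an application of Fubini's theorem identifies it as $E[\tau_1^{-\beta}]$. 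Hence $E[N_{nt}/n^\beta] \leq t^\beta M$ with $M := E[\tau_1^{-\beta}]$, the desired bound.

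To close the argument it remains to check that $M < \infty$. Using the elementary identity $\tau_1^{-\beta} = \Gamma(\beta)^{-1}\int_0^\infty s^{\beta-1}e^{-s\tau_1}\,ds$, Fubini, and the Laplace transform $E[e^{-s\tau_1}] = e^{-cs^\beta}$ of a strictly $\beta$-stable nonnegative random variable, the resulting double integral reduces by a single change of variable to a Gamma integral, giving the explicit value $M = 1/(c\,\Gamma(\beta+1))$.

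The only nontrivial step is the finiteness of $M$; subordinate to that, one must confirm that the comparison integral $\int_0^\infty P(\tau_1\leq j^{-1/\beta})\,dj$ converges at $j=\infty$, which reflects that the density of a positive $\beta$-stable law decays super-exponentially near $0$. An alternative route via the Laplace transform of the renewal function $U(t)=E[N_t]$, together with a Tauberian theorem, would deliver the correct order $t^\beta$ asymptotically, but the direct Riemann-sum comparison above has the advantage of producing a bound that is uniform in $n$, which is exactly what is needed for the application to Theorem \ref{thm2}.
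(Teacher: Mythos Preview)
Your argument is correct and tracks the paper's proof closely through the structural steps: both use the inverse relation \eqref{eqn:relationship} together with the self-similarity $T_k\stackrel{d}{=}k^{1/\beta}\tau_1$ to reduce everything to the single integral $\beta t^\beta\int_0^\infty P(\tau_1<y)\,y^{-1-\beta}\,dy$. The paper arrives there via $E[N_{nt}/n^\beta]=\int_0^\infty P(N_{nt}>xn^\beta)\,dx$ and the inequality $n x^{1/\beta}\le\lceil xn^\beta\rceil^{1/\beta}$; your Riemann-sum comparison for the decreasing function $g(j)=P(\tau_1\le j^{-1/\beta})$ is an equivalent device.

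Where you genuinely diverge is in establishing finiteness of that integral. The paper bounds it by splitting at a point $y_0$, invoking Zolotarev's small-argument asymptotics for the $\beta$-stable density, and applying L'Hospital to show $P(\tau_1<y)/y^{1+\beta}\to 0$ as $y\to 0^+$; this yields only an implicit constant. Your observation that the integral \emph{equals} $E[\tau_1^{-\beta}]$, which can then be computed exactly via the identity $x^{-\beta}=\Gamma(\beta)^{-1}\int_0^\infty s^{\beta-1}e^{-sx}\,ds$ and the known Laplace transform $E[e^{-s\tau_1}]=e^{-cs^\beta}$, is cleaner: it avoids the citation to density asymptotics, uses only elementary tools, and delivers the explicit constant $M=1/(c\,\Gamma(\beta+1))$. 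The paper's route has the minor advantage of making transparent exactly which feature of the law (super-exponential decay of the density at $0$) is responsible for convergence, but for the purposes of Theorem~\ref{thm2} your version is both shorter and sharper.
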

\begin{proof}
The key to bounding the expectation will be the asymptotics of the density of $\tau_1$ at zero.  To get at $\tau_1$, we write the expectation as the integral of the tail probabilities of the process $N_{nt}/n^{\beta}$.  Using the inverse relationship given by Equation (\ref{eqn:relationship}) between $N_t$ and $T(n)$ as well as the fact that the $\tau_i$ are strictly $\beta$-stable, we obtain
\begin{align*}
E\left[\frac{N_{nt}}{n^\beta}\right] &= \int_0^\infty P(N_{nt}>xn^\beta )\text{ }dx\\
&=\int_0^\infty P\left(T\left(\lceil xn^\beta \rceil\right) < nt\right)\text{ }dx \\
&=\int_0^\infty P\left(\tau_1< \frac{nt}{\left(\lceil xn^\beta \rceil \right)^{1/\beta}}\right)\text{ }dx.
\end{align*}

Since $nx^{1/\beta}\leq \lceil xn^\beta\rceil^{1/\beta}$, the expectation is bounded above by $\int_0^\infty P\left(\tau_1 < \frac{t}{x^{1/\beta}}\right)\text{ }dx$.  By change of variables, this integral can be re-written in a more useful form.  Explicitly:

\begin{equation*}
\int_0^\infty P\left(\tau_1 < \frac{t}{x^{1/\beta}}\right) \text{ }dx= \beta t^{\beta} \int_0^\infty \frac{P(\tau_1<y)}{y^{1+\beta}}\text{ }dy.
\end{equation*}

It is not difficult to bound this integral away from 0, so it will remain to show that the integral is also bounded near 0.  Observe that for any $y_0>0$,
\begin{align*}
E\left[\frac{N_{nt}}{n^\beta}\right]&\leq \beta t^{\beta}\left[\int_0^{y_0} \frac{P(\tau_1<y)}{y^{1+\beta}}\text{ }dy + \int_{y_0}^\infty \frac{1}{y^{1+\beta}}\text{ }dy \right]\\
&\leq t^{\beta} \Bigg\{\beta \bigg[y_0\cdot \sup_{0<y\leq y_0}\frac{P(\tau_1<y)}{y^{1+\beta}} + \frac{1}{\beta y_0^\beta}  \bigg]\Bigg\}.
\end{align*}

The objective is now to establish that the term in braces can be bounded by a finite number.  By Theorem 2.5.2 in \cite{Zolotarev}, $\displaystyle \lim_{y\rightarrow 0} \frac{f_{\tau_1}(y)}{Z(y)}=1$, where $f_{\tau_1}(y)$ is the density of $\tau_1$ and
\begin{equation*}
Z(y) = A \frac{e^{-C y^{\frac{-\beta}{1-\beta}}}}{y^{\frac{2-\beta}{2(1-\beta)}}}, \text{ where } A=\frac{\beta^{\frac{2-\beta}{2(1-\beta)}}}{\sqrt{2\pi\beta(1-\beta)}} \text{ and }
C= (1-\beta)\beta^{\frac{\beta}{1-\beta}}>0.
\end{equation*}
To apply this result, we note that the derivative of the cumulative distribution function is the density, and apply L'Hospital's rule to obtain:
\begin{align*}
\lim_{y\rightarrow 0^+} \frac{P(\tau_1 <y)}{y^{1+\beta}}&=
\lim_{y\rightarrow 0^+} \frac{f_{\tau_1}(y)}{(1+\beta)y^\beta}
=\lim_{y\rightarrow 0^+} \frac{f_{\tau_1}(y)}{Z(y)} \cdot \lim_{y\rightarrow 0^+} \frac{Z(y)}{(1+\beta)y^\beta}\\
&= \frac{A}{1+\beta}\lim_{y\rightarrow 0^+} \frac{e^{-C y^{\frac{-\beta}{1-\beta}}}}
{y^{\frac{2-\beta}{2(1-\beta)}+\beta}}
= 0.
\end{align*}

Therefore, there exists $0<\delta<y_0$ such that for all $0<y<\delta$, $\frac{P(\tau_1<y)}{y^{1+\beta}} \leq 1$ and
\begin{align*}
\sup_{0<y\leq y_0}\frac{P(\tau_1<y)}{y^{1+\beta}}
&=\max\left(\sup_{0<y<\delta}\frac{P(\tau_1<y)}{y^{1+\beta}}, \max_{\delta\leq y \leq y_0}\frac{P(\tau_1<y)}{y^{1+\beta}}\right)\\
&\leq \max\left(1,\max_{\delta\leq y \leq y_0}\frac{1}{y^{1+\beta}}\right)
\leq 1 + \frac{1}{\delta^{1+\beta}} < \infty.
\end{align*}

Let $M = \beta\Big[y_0\Big(1+\frac{1}{\delta^{1+\beta}}\Big) + \frac{1}{\beta y_0^\beta}\Big]$.  Then for all $n$, $\displaystyle E\bigg[\frac{N_{nt}}{n^\beta}\bigg] \leq t^{\beta}M$, as desired.
\end{proof}

\begin{proof}[Proof of Theorem \ref{thm2}]
We first show that $(X^n(t))_{n\geq 1}$ is a sequence of martingales.  Note that the jumps $\xi_i$ have mean 0 and finite first moment and for each $n$, $E[N_{nt}]<\infty$ for all $t\geq 0$.  By the same techniques used in Lemma \ref{lem:martingale}, we see that $X^n(t)$ is an $\mathcal{F}_t^n$-martingale (and therefore a local martingale) for each $n$, where $\mathcal{F}_t^n$ is the natural filtration of $X^n$.  Therefore, the first hypothesis of Proposition \ref{prop:UT} holds.

Now we verify the second condition: $\sup_n E^n[\sup_{s\leq t}|\Delta X^n(s)|]<\infty$ for each $t<\infty$.  Fix $t < \infty$.  Observe that $X^n(s)$ is a c\`{a}dl\`{a}g step process with jumps of size $\left|\frac{\xi_k}{cn^{\beta/2}}\right|$ at times $\frac{T(k)}{n}$, $k\in \N$.

As in Theorem \ref{thm1}, the goal is to obtain for each $t$ a uniform bound in $n$ on the expectation.  To this end, we first expand the expectation using the definition and standard technique of computing the integral of the tail probabilities.  To apply L\'{e}vy's Inequality, we symmetrize by replacing $\xi_k$ with $\epsilon_k\xi_k$ where $(\epsilon_k)$ is a sequence of i.i.d. Rademacher random variables, independent of $(\xi_k)$.  Then $|\xi_k|=|\epsilon_k\xi_k|$ and $\epsilon_k\xi_k$ are i.i.d. in $DOA(Z)$.  Let $\widetilde{S}(m)$ denote the symmetrized sum.  Since $X^n(s)$ has only finitely many jumps by time $t$, we can replace the $\sup$ up to time $t$ by the $\max$ up to $N_{nt}$ and the following holds:
\begin{align*}
E^n[\sup_{s\leq t}|\Delta X^n(s)| ]
&= \sum_{m=0}^\infty \left[\int_0^\infty P^n\left(\max_{1\leq k\leq m}\frac{|\epsilon_k\xi_k|}{cn^{\beta/2}}>x \right)\text{ }dx \right] P(N_{nt}=m)\\
&\leq \sum_{m=0}^\infty \left[\int_0^\infty 2P^n\left(\left|\sum_{k=1}^m \frac{\epsilon_k\xi_k}{cn^{\beta/2}}\right|>x \right)\text{ }dx \right]P(N_{nt}=m)\\
&= 2\sum_{m=0}^\infty E^n\left|\frac{\widetilde{S}(m)}{cn^{\beta/2}} \right| P(N_{nt}=m).
\end{align*}

We can easily bound the sum for a finite number of terms.  The goal is then to show that the tail of the sum is bounded.  Since $\epsilon_k\xi_k \in DOA(Z)$, then by \cite[Exercise 9, p. 91]{AG}, $\displaystyle{E\left|\frac{\widetilde{S}(m)}{m^{1/2}}\right| \rightarrow E|Z|}$, so there exists $m_0$ such that for all $m>m_0$, $\displaystyle{E\left|\frac{\widetilde{S}(m)}{m^{1/2}}\right|\leq E|Z| + 1}$.  We break up the sum at $m_0$:
\begin{equation}
\frac{2}{cn^{\beta/2}}\left[\sum_{m=0}^{m_0} E^n|\tilde{S}(m)| P(N_{nt}=m)
+ \sum_{m=m_0+1}^\infty E^n\left|\frac{\widetilde{S}(m)}{m^{1/2}} \right| m^{1/2}P(N_{nt}=m)\right].
\label{eqn:breakupsum}
\end{equation}

The sum up to $m_0$ is finite since $E^n|\widetilde{S}(m)|\leq E|\xi_1|<\infty$ by the triangle inequality.  By applying the bound given above on the first moment of $\widetilde{S}(m)/m^{1/2}$, we can bound the tail of the sum by:
\begin{equation}
\frac{2}{c}(E|Z|+1)E\bigg[\left(\frac{N_{nt}}{n^\beta}\right)^{1/2}\bigg].
\label{eqn:tail}
\end{equation}

\noindent Finally, we show Equation (\ref{eqn:tail}) is finite.  Since $\sqrt{x} \leq x+1$ for all $x\geq 0$, and $E[N_{nt}/n^\beta]\leq t^\beta M$ by Lemma \ref{lem:unifbound}, we have
\begin{equation}
E\bigg[\left(\frac{N_{nt}}{n^\beta}\right)^{1/2}\bigg] \leq t^\beta M + 1 < \infty.
\end{equation}

We have shown that Equation (\ref{eqn:breakupsum}) is finite and is an upper bound for $E^n[\sup_{s\leq t}|\Delta X^n(s)| ]$.  Therefore, the second hypothesis of Proposition \ref{prop:UT} is verified.

Since $(X^n)$ satisfies UT, an application of Theorem 34 in \cite{KP1995} yields the conclusion that if $(H^n,X^n) \Rightarrow (H,B(E))$ in $(\mathbb{D}_{\R^2},J_1)$ then there exists an appropriate filtration such that as $n\rightarrow \infty$,
\begin{equation*}
\left\{\int_0^t H^n(s-) \text{ }dX^n(s)\right\}_{t\geq 0} \Longrightarrow \left\{\int_0^t H(s-) \text{ }dB(E_s)\right\}_{t\geq 0} \text{ in } (\mathbb{D}_\R,J_1).
\end{equation*}

\end{proof}

The following example proves the conjecture of Germano \textit{et al.} \cite{Scalas2010}: that the stochastic integral of the scaled compound Poisson process with respect to the scaled compound Poisson process converges weakly to the stochastic integral of Brownian motion with respect to Brownian motion.

\begin{example}
\label{ex:Germano_conj}
Let $(\xi_i)_{i\in \N}$ be i.i.d. normal random variables with mean 0 and variance 1, and $(\tau_i)_{i\in \N}$ be i.i.d. exponential random variables with $\lambda = 1$.  Then $X(t) = \sum_{i=1}^{N_t}\xi_i$ is the usual compound Poisson process, and $E(N_t) = t$.  Let $X^n(t) = \sum_{i=1}^{N(nt)}\frac{\xi_i}{\sqrt{n}}$ be the scaled version.  Then $X^n(t)$ is a martingale for each $n$.  Fix $t<\infty$.  We show $E^n[\sup_{s\leq t}|\Delta X^n(s)| ] < \infty$ using the same techniques as in the proof of Theorem \ref{thm2}.  The sum in Equation (\ref{eqn:breakupsum}) (with $\beta = 1$) up to $m_0$ can be bounded by a finite number and for $m>m_0$, the tail is bounded by
\begin{equation*}
(E|Z|+1)E\bigg[\Big(\frac{N_{nt}}{n}\Big)^{1/2}\bigg] \leq (E|Z|+1)\sqrt{t} < \infty.
\end{equation*}
Since $(X^n,X^n) \Rightarrow (B,B)$ in $(\mathbb{D}_\R, J_1)$ and the above shows the sequence $(X^n)$ is good, then as $n\rightarrow \infty$,
\begin{equation}
\left\{\int_0^t X^n(s-) \text{ }dX^n(s)\right\}_{t\geq 0} \Longrightarrow \left\{\int_0^t B(s) \text{ }dB(s)\right\}_{t\geq 0} \text{ in } (\mathbb{D}_{\R},J_1).
\end{equation}
\end{example}

\begin{rem}Meerschaert and Sheffler prove a general functional central limit theorem for CTRWs, in which the CTRW limit process is a time-changed L\'{e}vy process $A(E_t)$ (see Theorem 4.2 in \cite{MS:2004}).  This theorem holds in the $M_1$-topology but not in the stronger $J_1$-topology, except for the special case where $A$ is Brownian motion (as discussed in Section \ref{sec:background}).  In proving Theorems \ref{thm1} and \ref{thm2} we rely on Kurtz and Protter's weak convergence theorem for stochastic integrals which holds in the $J_1$-topology.  While we hope that a version of this theorem exists in the $M_1$-topology, this question remains open.
\end{rem}

\section{Applications}
\label{sec:App}
Knowing that a sequence of scaled CTRWs is good is useful in many situations.  By definition, goodness of a sequence of scaled CTRWs $(X^n)$ implies that if $(H^n,X^n)$ converges jointly to $(H,X)$, then the integral of $H^n$ driven by $X^n$ converges weakly to the integral of $H$ driven by $X$.  Since the stochastic integral driven by a scaled CTRW can be defined as a sum, the stochastic integral driven by a scaled CTRW can be easily simulated and used as an approximation for the stochastic integral driven by its limit process.  Additionally, goodness is a necessary condition on the driving process for the weak convergence of stochastic differential equations to hold.  Section \ref{subsec:approx} focuses on two examples which apply the result of Theorem \ref{thm2}, in which the CTRW limit process is a time-changed Brownian motion, to cases of weak convergence of stochastic differential equations driven by scaled CTRWs.  Section \ref{subsec:particle} describes the connection with particle tracking.

\subsection{Weak convergence of stochastic differential equations}
\label{subsec:approx}
The results in Section \ref{sec:Results} can be combined with a theorem by Kurtz and Protter on weak convergence of solutions to stochastic differential equations \cite[Theorem 38]{KP1995}.  A key condition in this theorem is that the driving process is good.  The conditions of the theorem are simplified in the case that the driving process is a scaled CTRW known to be good.  We state an example in the case that the driving process satisfies the conditions in Theorem \ref{thm2}, where the limit process is a time-changed Brownian motion.
\begin{example}
Let $(X^n)_{n\geq 1}$ be a sequence of semimartingales and $(Z^n)_{n\geq 1}$ be a sequence of CTRWs satisfying the conditions in Theorem \ref{thm2}.  Assume the following:
\begin{enumerate}[$\bullet$]
\item $(X^n,Z^n)$ satisfies
\begin{equation}
X^n_t = \int_0^t F^n(X^n)_{s-}\text{ } dZ^n_s.
\label{inteqn}
\end{equation}
\item $(X^n,Z^n)$ is relatively compact in $(\mathbb{D}_{\R^2},J_1)$.
\item $F^n,F$ are $J_1$-continuous.
\end{enumerate}
Then since the sequence $(Z^n)_{n\geq 1}$ is good by Theorem \ref{thm2} and $Z^n \Rightarrow B(E)$ by Theorem \ref{thm:B(E)}, it follows from Theorem 38 in \cite{KP1995} that any limit point of the sequence $(X^n)$ satisfies
\begin{equation}
X_t = \int_0^t F(X)_{s-}\text{ } dB_{E(s)}.
\end{equation}
\label{thm:CTRWSDE}
\end{example}

The next example involves a special kind of stochastic integral equation for which the form of the solution is known.  Consider $X_t = 1 + \int_0^t X_{s-}\text{ }dY_s$, where $Y$ is a semimartingale and $Y_0=0$.
By \cite[Chapter II, Theorem 37]{Protter}, the solution $X$ is a semimartingale called the \emph{stochastic exponential} of $Y$ and is given by:
\begin{equation}
X_t = \exp\left\{Y_t - \frac{1}{2}[Y,Y]_t\right\}\prod_{0<s\leq t} (1+\Delta Y_s)\exp\left\{-\Delta Y_s + \frac{1}{2}(\Delta Y_s)^2\right\},
\end{equation}
where the infinite product converges.  Using Theorem \ref{thm2} and consequences of the goodness of the scaled CTRWs, we are able to verify directly, without checking any conditions on the stochastic integral equations, that the stochastic exponential of these scaled CTRWs converges weakly to the stochastic exponential of the CTRW limit process $B(E_t)$.

\begin{example}
Let $\displaystyle Z^n(t) = \sum\limits_{i=1}^{N(nt)} \frac{\xi_i}{n^{\beta/2}}$ where $(\xi_i)$ are i.i.d. Rademacher random variables and $(\tau_i)$ are i.i.d. strictly $\beta$-stable random variables, $\beta \in (0,1)$.  Then $Z^n \Rightarrow B(E)$ by Theorem \ref{thm:B(E)} and $(Z^n)$ is good by Theorem \ref{thm2}.  Consider
\begin{equation}
X^n_t = 1 + \int_0^t X_{s-}^n \text{ } dZ_s^n.
\label{mySDE}
\end{equation}

\noindent We want to verify that $X^n \Rightarrow X$ in $(\mathbb{D}_\R,J_1)$, where $X$ is a solution of the limiting equation
\begin{equation}
X_t = 1 + \int_0^t X_{s-}\text{ }dB_{E_s}.
\label{BEIE}
\end{equation}

We first analyze the form of the solutions.  Since $B(E_t)$ is a semimartingale, the stochastic differential equation (\ref{BEIE}) has a unique solution which is given by $X_t = \exp\{B_{E_t} - \frac{E_t}{2}\}$ \cite{Kei}.  Since $Z^n$ is a martingale, hence a semimartingale, with $Z^n(0)=0$, the stochastic exponential of $Z^n$ is the (unique) solution to (\ref{mySDE}) and is given by

\begin{align*}
X^n(t) &= \exp\left\{Z^n(t) - \frac{1}{2}[Z^n,Z^n]_t\right\}\prod_{0<s\leq t}(1+\Delta Z^n(s))\exp\left\{-\Delta Z^n(s) + \frac{1}{2}(\Delta Z^n(s))^2\right\}\\
&\equiv W^n(t) \cdot A^n(t).
\end{align*}

To handle $\displaystyle W^n(t) = \exp\left\{Z^n(t) - \frac{1}{2}[Z^n,Z^n]_t\right\}$,
we appeal to the following two results; the first is an immediate corollary to Theorem 36 in \cite{KP1995} and the second is Theorem 10.17 in \cite{Jacod}:
\begin{enumerate}[1.]
\setlength{\itemsep}{0.1in}
\item If $(Z^n)$ is good and $Z^n\Rightarrow Z$ in $(\mathbb{D}_\R,J_1)$, then $\displaystyle(Z^n,[Z^n,Z^n])\Rightarrow (Z,[Z,Z]) \text{ in } (\mathbb{D}_{\R^2},J_1)$.
\label{thm:quadvarconv}
\item Let $Y$ be an $(\mathcal{F}_t)$-semimartingale and $(T_t)$ be a finite continuous $(\mathcal{F}_t)$-time-change.  Then $\displaystyle [Y\circ T,Y\circ T] = [Y,Y]\circ T$.
\label{thm:Jacod}
\end{enumerate}

Since Brownian motion $B$ is a semimartingale, $E$ is a continuous nondecreasing process with $E_t<\infty$ almost surely, and the quadratic variation $[B,B]_t = t$, then $[B(E),B(E)]_t = ([B,B]\circ E)_t = E_t$.

Since $Z^n \Rightarrow B(E)$ in $(\mathbb{D}_\R,J_1)$, $[Z^n,Z^n]\Rightarrow[B(E),B(E)]=E$ in $(\mathbb{D}_\R,J_1)$.  Now using the $J_1$-continuity of addition on $(Z^n, \frac{1}{2}[Z^n,Z^n])$ and the continuous-mapping theorem, we have:
\begin{equation*}
Z^n -  \frac{1}{2}[Z^n,Z^n] \Rightarrow B(E) -\frac{1}{2}E \text{ in } (\mathbb{D}_\R,J_1).
\end{equation*}
By the $J_1$-continuity of the exponential and the continuous-mapping theorem,
\begin{equation*}
\exp\left\{Z^n - \frac{1}{2}[Z^n,Z^n]\right\} \Rightarrow \exp\left\{B(E) - \frac{1}{2}E\right\} \text{ in } (\mathbb{D}_\R,J_1).
\end{equation*}
So $W^n \Rightarrow X$ in $(\mathbb{D}_\R,J_1)$.  We need to show $W^n \cdot A^n \Rightarrow X$ in $(\mathbb{D}_\R,J_1)$.  If $A^n \Rightarrow 1$, then $A^n \rightarrow 1$ in probability and so $(A^n,W^n)\Rightarrow (1,X)$ in $\mathbb{D}\times \mathbb{D}$.  Since 1 and $X$ are both continuous, multiplication here is continuous by $J_1$-continuity of multiplication \cite{Whitt}.  By the continuous-mapping theorem, $W^n\cdot A^n \Rightarrow X$.  It remains to show $A^n \Rightarrow 1$, or equivalently $\log A^n \Rightarrow 0$.

Observe that
\begin{equation*}
\log A^n(t) = \sum_{i=1}^{N(nt)} [\log(1+\Delta Z^n(s)) - \Delta Z^n(s) + \frac{1}{2}(\Delta Z^n(s))^2].
\end{equation*}
Also note that $|\Delta Z^n(t)| = |\frac{\xi_k}{n^{\beta/2}}|$ for $t = \frac{T(k)}{n}$ and 0 otherwise, since $Z^n(t)$ is a c\`{a}dl\`{a}g step process with jumps at times $\frac{T(k)}{n}$, $k\in \N$.  For $n>2^{2/\beta}$, $\displaystyle |\Delta Z^n(s)| = \bigg|\frac{\xi_i}{n^{\beta/2}}\bigg| \leq \frac{1}{n^{\beta/2}}<\frac{1}{2}$, so
\begin{equation*}
|\log(1+\Delta Z^n(s)) - \Delta Z^n(s) + \frac{1}{2}(\Delta Z^n(s))^2| \leq \frac{|\Delta Z^n_s|^3}{3},
\end{equation*}
by the alternating series estimation theorem.
Therefore,
\begin{equation*}
\sum_{i=1}^{N(nt)} \bigg|\log(1+\Delta Z^n(s)) - \Delta Z^n(s) + \frac{1}{2}(\Delta Z^n(s))^2\bigg|
\leq \sum_{i=1}^{N(nt)} \bigg|\frac{\xi_i^3}{n^{3\beta/2}}\bigg|
\leq \sum_{i=1}^{N(nt)} \frac{1}{n^{3\beta/2}} = \frac{N(nt)}{n^\beta}\frac{1}{n^{\beta/2}}.
\end{equation*}

\noindent Since $\displaystyle \lim\limits_{n\rightarrow \infty} \frac{1}{n^{\beta/2}} = 0$ and $\displaystyle \frac{N(nt)}{n^\beta}\Rightarrow E(t)$ in $(\mathbb{D}_\R,J_1)$ as $n\rightarrow \infty$ by Corollary 3.4 in \cite{MS:2004}, $\displaystyle \frac{N(nt)}{n^\beta}\frac{1}{n^{\beta/2}} \Rightarrow 0$ in $(\mathbb{D}_\R,J_1)$, which implies $\log A^n(t) \Rightarrow 0$, thereby confirming that $X^n \Rightarrow X$.
\end{example}

\subsection{Particle tracking}
\label{subsec:particle}
These results also have applications to particle tracking.  Particle tracking is a method of solving partial differential equations in cases where an analytic solution (i.e., closed form solution) cannot be found.  Particle tracking involves first finding the stochastic differential equation (SDE) corresponding to the forward Kolmogorov equation and second simulating the solution to the SDE \cite{Zhang}.  For example, a special case of Theorem 4.1 in \cite{HKU} considers the SDE:
\begin{equation}
dX_t = a(X_t)dE_t + b(X_t)dB_{E_t},X_0=x,
\label{BESDE}
\end{equation}
and shows that the density $p^X(t,x,y)$ of $X_t$ satisfies in the weak sense the following forward Kolmogorov equation:
\begin{align}
\label{TFPDE}
D_*^{\beta}&p^X(t,x,y) = \left[-\frac{\partial}{\partial y}a(y) + \frac{1}{2}\frac{\partial^2}{\partial y^2}b^2(y) \right]p^X(t,x,y)\\
&p^X(0,x,y) = \delta_x(y)\notag,
\end{align}
where $D_*^\beta$ is the Caputo fractional derivative defined as
\begin{equation}
D_*^\beta g(t) = \frac{1}{\Gamma(1-\beta)} \int_0^t \frac{g'(u)}{(t-u)^\beta}\text{ }du.
\end{equation}
Because of this correspondence between the SDE (\ref{BESDE}) and the time-fractional differential equation (\ref{TFPDE}), particle tracking can be used to solve (\ref{TFPDE}).  The solution to the SDE can be simulated using CTRWs where the waiting times arise from $\beta$-stable random variables as in Theorem \ref{thm2}.

\bibliographystyle{plain}
\bibliography{CTRW&StochInt1}   

\end{document}